\newtheorem{thm}{Theorem}[section]
\newtheorem{prop}[thm]{Proposition}
\newtheorem{lem}[thm]{Lemma}
\theoremstyle{definition}
\newtheorem{question}[thm]{Question}
\theoremstyle{remark}
\numberwithin{equation}{section}
\def\@setaddresses{\par
  \nobreak \begingroup
  \footnotesize
  \def\author##1{\nobreak\addvspace\bigskipamount}%
  \def\\{\unskip, \ignorespaces}%
  \interlinepenalty\@M
  \def\address##1##2{\begingroup
    \par\addvspace\bigskipamount\indent
    \@ifnotempty{##1}{(\ignorespaces##1\unskip) }%
    {\scshape\ignorespaces##2}\par\endgroup}%
  \def\email##1##2{\begingroup
    \@ifnotempty{##2}{\nobreak\indent
      \@ifnotempty{##1}{, \ignorespaces##1\unskip}
      \unskip\ttfamily##2\par}\endgroup}%
  \addresses
  \endgroup}
\title{On O'Grady's generalized Franchetta conjecture}
\author{Nebojsa Pavic}
\author{Junliang Shen}
\author{Qizheng Yin}
\address{Departement Mathematik\\ETH Z\"urich\\R\"amistrasse 101\\8092 Z\"urich\\Switzerland}
\email{npavic@student.ethz.ch, junliang.shen@math.ethz.ch, qizheng.yin@math.ethz.ch}
\date{\today}
\subjclass[2010]{14C15, 14J10, 14J28}
\keywords{Chow groups, moduli spaces, $K3$ surfaces}
\begin{document}

\begin{abstract}
We study relative zero cycles on the universal polarized $K3$ surface $X \to \mathcal{F}_g$ of degree $2g - 2$. It was asked by O'Grady if the restriction of any class in $\mathrm{CH}^2(X)$ to a closed fiber $X_s$ is a multiple of the Beauville--Voisin canonical class $c_{X_s} \in \mathrm{CH}_0(X_s)$. Using Mukai models, we give an affirmative answer to this question for $g \leq 10$ and $g = 12, 13, 16, 18, 20$.
\end{abstract}

\maketitle

\section{Introduction}
\noindent Throughout, we work over the complex numbers. Let $S$ be a projective $K3$ surface. In \cite{BV}, Beauville and Voisin studied the Chow ring $\mathrm{CH}^\ast(S)$ of $S$. They showed that there is a canonical class $c_{S} \in \mathrm{CH}_0(S)$ represented by a point on a rational curve in $S$, which satisfies the following properties:
\begin{enumerate}[labelindent=\parindent,leftmargin=*]
\item[(i)] The intersection of two divisor classes on $S$ always lies in $\mathbb{Z}c_S \subset \mathrm{CH}_0(S)$.
\item[(ii)] The second Chern class $c_2(T_S)$ equals $24 c_S \in \mathrm{CH}_0(S)$. 
\end{enumerate}
This result is rather surprising since the Chow group $\mathrm{CH}_0(S)$ is infinite-dimensional by Mumford's theorem \cite{Mum}.

Let $\mathcal{F}_g$ denote the moduli space of (primitively) polarized $K3$ surfaces of degree $2g - 2$. For $g \geq 3$, let $\mathcal{F}_g^0 \subset \mathcal{F}_g$ be the open dense subset parametrizing polarized~$K3$ surfaces with trivial automorphism groups, which carries a universal family $X \rightarrow \mathcal{F}_g^0$. Motivated by Franchetta’s conjecture on the moduli spaces of curves (see~\cite{AC}), O'Grady asked the following question in \cite{OG}, referred to as the generalized Franchetta conjecture.

\begin{question}[Generalized Franchetta conjecture]\label{Conj}
Given a class $\alpha \in \mathrm{CH}^2(X)$ and a closed point $s \in \mathcal{F}_g^0$, is it true that $\alpha|_{X_s} \in \mathbb{Z}c_{X_s}$?
\end{question}

The goal of this paper is to give an affirmative answer to Question \ref{Conj} for a list of small values of $g$. By the work of Mukai \cite{Muk1, Muk3, Muk4, Muk5}, for these $g$ a general polarized $K3$ surface can be realized in a variety with ``small'' Chow groups as a complete intersection with respect to a vector bundle.

\begin{thm}\label{Main}
The generalized Franchetta conjecture holds for $g\leq 10$ and $g=12$, $13, 16, 18, 20$.
\end{thm}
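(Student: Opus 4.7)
The plan is to exploit Mukai's explicit realizations of general $K3$ surfaces of small genus as zero loci of sections of specific vector bundles on homogeneous varieties. For each $g$ in the list, Mukai provides a smooth projective homogeneous variety $G_g$ (a Grassmannian, orthogonal or symplectic Grassmannian, spinor variety, or a $G_2$-homogeneous space) together with a globally generated vector bundle $E_g$ on $G_g$, such that a general polarized $K3$ of genus $g$ arises as the zero locus $X_\sigma$ of a general section $\sigma \in H^0(G_g, E_g)$, with the polarization the restriction of $\mathcal{O}_{G_g}(1)$. Let $U_g \subset H^0(G_g, E_g)$ be the open subset of sections whose vanishing locus is a smooth polarized $K3$ with trivial automorphism group; the induced classifying map $U_g \to \mathcal{F}_g^0$ is dominant. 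I would first establish the conjecture for the universal family $\mathcal{X} \to U_g$ over the Mukai parameter space, and then extend to all of $\mathcal{F}_g^0$ by a specialization argument along smooth curves in $\mathcal{F}_g^0$ meeting the Mukai locus, using $\tfrac{1}{24} c_2(T_{X/\mathcal{F}_g^0}) \in \mathrm{CH}^2(X)_\mathbb{Q}$ as a fibrewise rational replacement for $c_{X_s}$.

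The first step is a Chow-theoretic reduction. Let $\mathcal{Z} = \{(x,\sigma) \in G_g \times H^0(G_g, E_g) : \sigma(x) = 0\}$ be the total incidence variety. Global generation of $E_g$ identifies $\mathcal{Z} \to G_g$ with the kernel of the evaluation map, so $\mathcal{Z}$ is a vector subbundle of the trivial bundle $G_g \times H^0(G_g, E_g)$, and homotopy invariance of Chow groups gives $p^* : \mathrm{CH}^*(G_g) \xrightarrow{\sim} \mathrm{CH}^*(\mathcal{Z})$. Since $\mathcal{X}$ is open in $\mathcal{Z}$, the localization sequence yields a surjection $\mathrm{CH}^*(G_g) \twoheadrightarrow \mathrm{CH}^*(\mathcal{X})$. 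Consequently every $\alpha \in \mathrm{CH}^2(\mathcal{X})$ is the pullback of some $\tilde{\alpha} \in \mathrm{CH}^2(G_g)$, and for $\sigma \in U_g$ one has $\alpha|_{X_\sigma} = \tilde{\alpha}|_{X_\sigma}$. It therefore suffices to prove that every class in $\mathrm{CH}^2(G_g)$ restricts to $\mathbb{Z} c_{X_\sigma}$ on $X_\sigma$.

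Two families of classes on $G_g$ visibly restrict to $\mathbb{Z} c_{X_\sigma}$. First, any product of two divisor classes on $G_g$ restricts to a product of two divisor classes on $X_\sigma$, which lies in $\mathbb{Z} c_{X_\sigma}$ by Beauville--Voisin (i); in particular $H^2|_{X_\sigma} = (2g-2) c_{X_\sigma}$. Second, the normal bundle sequence
\[
0 \to T_{X_\sigma} \to T_{G_g}|_{X_\sigma} \to E_g|_{X_\sigma} \to 0,
\]
together with $c_1(T_{X_\sigma}) = 0$ and Beauville--Voisin (ii), gives
\[
c_2(T_{G_g})|_{X_\sigma} - c_2(E_g)|_{X_\sigma} = c_2(T_{X_\sigma}) = 24 \, c_{X_\sigma}.
\]
The remaining task is to show, for each ambient $G_g$ on Mukai's list, that $\mathrm{CH}^2(G_g)$ is generated over $\mathbb{Z}$ by such classes --- products of divisors together with $c_2(T_{G_g})$ and $c_2(E_g)$. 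When $E_g$ splits as a sum of line bundles (the complete-intersection cases $g \leq 5$) this is automatic, since then $c_2(E_g)$ itself is a polynomial in divisor classes. In the remaining cases $E_g$ is built from the tautological sub- and quotient bundles of $G_g$, and the spanning statement is verified via Schubert calculus.

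The principal obstacle is precisely this final case-by-case verification. For each of the ambient varieties appearing on Mukai's list --- the Grassmannians $\mathrm{Gr}(2,n)$, the spinor tenfold, a Lagrangian Grassmannian, a $G_2$-variety, and the more exotic homogeneous varieties governing $g = 12, 13, 16, 18, 20$ --- one must describe $\mathrm{CH}^2(G_g)$ explicitly in the Schubert basis and check that $H^2$, $c_2(T_{G_g})$, and $c_2(E_g)$ span it integrally. The bookkeeping is heaviest in the spinor and exceptional cases, where $\mathrm{CH}^2$ has larger rank and the tautological Chern classes of $E_g$ must be unwound in terms of Schubert classes. The conceptual framework is nevertheless uniform across all cases: beyond intersections of divisors, every new codimension-two class on $G_g$ is controlled by a single Beauville--Voisin identity involving the second Chern class of the normal bundle.
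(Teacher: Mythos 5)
Your setup---realizing the $K3$ surface as a complete intersection in a Mukai ambient variety, using the incidence variety over the space of sections to reduce to classes pulled back from $\mathrm{CH}^2(G_g)$, and then combining products of divisors with the normal bundle sequence and the Beauville--Voisin properties---is exactly the paper's strategy for $g\le 10$ and $g=12,18,20$ (the paper phrases the last step as the nonvanishing of the coefficient of $c_2(\mathcal{Q})$ in $c_2(T_{\mathbb{G}_g})-c_2(\mathcal{U}_g)$, and invokes Ro{\u\i}tman's torsion-freeness of $\mathrm{CH}_0$ to pass from $\mathbb{Q}$- back to $\mathbb{Z}$-coefficients). One imprecision even there: the normal bundle sequence only shows that the \emph{difference} $c_2(T_{G_g})-c_2(E_g)$ restricts into $\mathbb{Z}c_{X_\sigma}$, not each class separately, so the spanning statement you need is that $\mathrm{CH}^2(G_g)$ modulo products of divisors is generated by that single difference.

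The genuine gap is that your reduction ``it suffices to prove that every class in $\mathrm{CH}^2(G_g)$ restricts to $\mathbb{Z}c_{X_\sigma}$'' is \emph{false} for $g=13$ and $g=16$, and no Schubert calculus will rescue it. For $g=13$, $G_{13}=\mathbb{G}(3,7)$ and $\mathrm{CH}^2$ modulo products of divisors is generated by $c_2(\mathcal{Q})$, equivalently by $c_2(\mathcal{E}^\vee)$. But $i^*\mathcal{E}^\vee$ is only semi-rigid: it moves in a two-dimensional moduli space $M_S$ of stable sheaves with Mukai vector $(3,H,4)$, the embedding $i$ of a fixed $S$ is not unique, and as $i$ varies the zero-cycle $i^*c_2(\mathcal{E}^\vee)\in\mathrm{CH}_0(S)$ genuinely varies (this is detected via Mukai's isomorphism $\theta\colon\mathrm{CH}^*(M_S)\xrightarrow{\ \sim\ }\mathrm{CH}^*(S)$ induced by a universal sheaf). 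Hence $i^*c_2(\mathcal{Q})\notin\mathbb{Z}c_S$ for a general embedding, and correspondingly the coefficient of $c_2(\mathcal{Q})$ in $c_2(T_{\mathbb{G}(3,7)})-c_2(\mathcal{U}_{13})$ vanishes, so the normal-bundle relation yields nothing beyond divisor products. The same obstruction occurs for $g=16$ with $c_2(\mathcal{V}_{16})$ on $\mathbb{G}(2,3,4)$ (which is moreover not homogeneous, and whose $\mathrm{CH}^2$ has rank $3$, so a single relation could not span in any case). What saves these cases is an idea absent from your proposal: a class pulled back from the universal family over the moduli space restricts to the \emph{same} zero-cycle under all the different embeddings of a fixed fiber, and this invariance, played against the fact that $i^*c_2(\mathcal{E}^\vee)$ does vary with $i$, forces the coefficient of the offending generator to be zero. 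Without this (or some substitute) your argument establishes the theorem only for $g\le 10$ and $g=12,18,20$.
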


The paper is organized as follows. In Section 1 we review Mukai's constructions and make some comments about Question \ref{Conj}. In Section 2 we prove Theorem \ref{Main} for all cases except $g = 13, 16$. Two independent proofs are presented, one using Voisin's result~\cite{V2}, the other via a direct calculation. The cases $g = 13, 16$ have a different flavor and are treated in Section 3.

This work is inspired by a recent preprint of Pedrini \cite{Ped}. However, contrary to what was claimed there, it does not suffice to show that $\mathrm{CH}^2(X)_{\mathbb{Q}}$ is finite-dimensional. Our proof relies deeply on the result of Beauville--Voisin \cite{BV}.

\subsection*{Acknowledgement}
We are grateful to Rahul Pandharipande for his constant support and his enthusiasm in this project. We also thank Kieran O'Grady for his careful reading of a preliminary version of this paper. J.~S.~and Q.~Y.~were supported by the grant ERC-2012-AdG-320368-MCSK.

\section{Mukai models and the basic setting}

\noindent In this section we review Mukai's work \cite{Muk1, Muk3, Muk4, Muk5} on the projective models of general polarized $K3$ surfaces of small degrees. Using Mukai's models, we set up the framework for the proof of Theorem \ref{Main}.

The following table summarizes the ambient varieties $\mathbb{G}_g$ and  vector bundles $\mathcal{U}_g$ involved in the constructions. It is also accompanied by a glossary.
\[\arraycolsep=8pt
\begin{array}{lll|lll}
\toprule
g & \mathbb{G}_g & \mathcal{U}_g & g & \mathbb{G}_g & \mathcal{U}_g \\
\midrule
2 & \mathbb{P}(1, 1, 1, 2) & \mathcal{O}(6) & 9 & \mathbb{G}(3, 6) & \mathcal{O}(1)^{\oplus 4} \oplus \wedge^2\mathcal{Q} \\
3 & \mathbb{P}^3 & \mathcal{O}(4) & 10 & \mathbb{G}(2, 7) & \mathcal{O}(1)^{\oplus 3} \oplus \wedge^4\mathcal{Q} \\
4 & \mathbb{P}^4 & \mathcal{O}(2) \oplus \mathcal{O}(3) & 12 & \mathbb{G}(3, 7) & \mathcal{O}(1) \oplus (\wedge^2\mathcal{E}^\vee)^{\oplus 3} \\
5 & \mathbb{P}^5 & \mathcal{O}(2)^{\oplus 3} & 13 & \mathbb{G}(3, 7) & (\wedge^2\mathcal{E}^\vee)^{\oplus 2} \oplus \wedge^3\mathcal{Q} \\
6 & \mathbb{G}(2, 5) & \mathcal{O}(1)^{\oplus 3} \oplus \mathcal{O}(2) & 16 & \mathbb{G}(2, 3, 4) & \mathcal{V}_{16}^{\oplus 2} \oplus \widetilde{\mathcal{V}}_{16}^{\oplus 2} \\
7 & \mathbb{OG}(5, 10) & \mathcal{V}_7^{\oplus 8} & 18 & \mathbb{OG}(3, 9) & \mathcal{V}_{18}^{\oplus 5} \\
8 & \mathbb{G}(2, 6) & \mathcal{O}(1)^{\oplus 6} & 20 & \mathbb{G}(4, 9) & (\wedge^2\mathcal{E}^\vee)^{\oplus 3} \\
\bottomrule
\end{array}\]
\begin{itemize}[align=left,labelindent=\parindent,labelwidth=\widthof{$\mathbb{P}(1, 1, 1, 2)$},labelsep=1.5em,leftmargin=!]
\item[$\mathbb{P}(1, 1, 1, 2)$] $3$-dimensional weighted projective space with weights $(1, 1, 1, 2)$
\item[$\mathbb{G}(r, n)$] Grassmannian parametrizing $r$-dimensional subspaces of a fixed $n$-dimensional vector space
\item[$\mathcal{O}(i)$] line bundle on $\mathbb{G}(r, n)$ with respect to the Pl\"ucker embedding
\item[$\mathbb{OG}(r, n)$] orthogonal Grassmannian parametrizing $r$-dimensional isotropic subspaces of a fixed $n$-dimensional vector space equipped with a nondegenerate symmetric $2$-form
\item[$\mathcal{V}_7$] line bundle on $\mathbb{OG}(5, 10)$ corresponding to a spin representation
\item[$\mathcal{Q}$] universal quotient bundle on $\mathbb{G}(r, n)$
\item[$\mathcal{E}$] universal subbundle on $\mathbb{G}(r, n)$
\item[$\mathbb{G}(2, 3, 4)$] Ellingsrud--Piene--Str{\o}mme moduli space of twisted cubic curves, constructed as the GIT quotient of $\mathbb{C}^2\otimes \mathbb{C}^3\otimes \mathbb{C}^4$ by the action of $\mathrm{GL}_2\times \mathrm{GL}_3$ (see \cite{EPS})
\item[$\mathcal{V}_{16}$] rank $3$ tautological vector bundle on $\mathbb{G}(2, 3, 4)$
\item[$\widetilde{\mathcal{V}}_{16}$] rank $2$ tautological vector bundle on $\mathbb{G}(2, 3, 4)$
\item[$\mathcal{V}_{18}$] rank $2$ vector bundle on $\mathbb{OG}(3, 9)$ corresponding to a spin representation
\end{itemize}

For all $g$ listed above, Mukai showed that a general $K3$ surface over $\mathcal{F}_g$ is given as the zero locus of a general global section of $\mathcal{U}_g$ (the cases $g \leq 5$ are classical).

Let
\[\mathbb{P}_g = \mathbb{P}H^0(\mathbb{G}_g, \mathcal{U}_g)\]
be the projectivization of the space of global sections of~$\mathcal{U}_g$, and let
\[Y = \{(s, x) \in \mathbb{P}_g \times \mathbb{G}_g \,|\, s(x) = 0\}\]
be the incidence scheme. We have a diagram
\[\begin{tikzcd}
Y \arrow{r}{\iota} \arrow{d}{\pi} & \mathbb{G}_g \\
\mathbb{P}_g
\end{tikzcd}\]
where $\pi, \iota$ are the two projections.

The discussion above shows that a general fiber of $\pi: Y \to \mathbb{P}_g$ is a polarized $K3$ surface of degree $2g - 2$, and that $\mathbb{P}_g$ rationally dominates the moduli space $\mathcal{F}_g$. Moreover, since $\mathcal{U}_g$ is globally generated, we know that $\iota: Y \to \mathbb{G}_g$ is a projective bundle. Its fiber over a point $x \in \mathbb{G}_g$ is given by
\[\mathbb{P}H^0(\mathbb{G}_g, \mathcal{U}_g \otimes \mathcal{I}_x),\]
where $\mathcal{I}_x$ is the ideal sheaf of $x$. We have the following lemma regarding the Chow group $\mathrm{CH}^2(Y)$ and its restriction to a general fiber of $\pi$.

\begin{lem}\label{Lem0}
Given a closed point $s \in \mathbb{P}_g$ with $K3$ fiber $Y_s$, let $\phi_s: Y_s \hookrightarrow Y$ and $\iota_s: Y_s \hookrightarrow \mathbb{G}_g$ be the natural embeddings. Then we have
\[\mathrm{Im}(\phi_s^*: \mathrm{CH}^2(Y)_{\mathbb{Q}} \to \mathrm{CH}_0(Y_s)_{\mathbb{Q}}) = \mathrm{Im}(\iota_s^*: \mathrm{CH}^2(\mathbb{G}_g)_{\mathbb{Q}} \to \mathrm{CH}_0(Y_s)_{\mathbb{Q}}).\]
\end{lem}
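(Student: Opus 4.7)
The plan is to exploit the projective bundle structure of $\iota : Y \to \mathbb{G}_g$ recorded above and reduce the equality of images to a direct application of the projective bundle formula. Since $\mathcal{U}_g$ is globally generated, the evaluation map $H^0(\mathbb{G}_g, \mathcal{U}_g) \otimes \mathcal{O}_{\mathbb{G}_g} \twoheadrightarrow \mathcal{U}_g$ is surjective, and I would identify $Y$ with the projectivization $\mathbb{P}(\mathcal{K})$ of its kernel. Writing $\xi \in \mathrm{CH}^1(Y)$ for the associated tautological class, the projective bundle formula yields a decomposition
\[
\mathrm{CH}^2(Y)_{\mathbb{Q}} \;=\; \iota^* \mathrm{CH}^2(\mathbb{G}_g)_{\mathbb{Q}} \;\oplus\; \iota^* \mathrm{CH}^1(\mathbb{G}_g)_{\mathbb{Q}} \cdot \xi \;\oplus\; \mathbb{Q} \cdot \xi^2,
\]
where the presence of the last summand uses that the fibers of $\iota$ have dimension at least two, which a routine count from $h^0(\mathcal{U}_g)$ confirms in all cases listed.

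The decisive observation is that, under the incidence description $Y \subset \mathbb{P}_g \times \mathbb{G}_g$, the tautological class $\xi$ coincides with $\pi^* h$, where $h \in \mathrm{CH}^1(\mathbb{P}_g)$ is the hyperplane class. Since $\pi \circ \phi_s : Y_s \to \mathbb{P}_g$ is the constant map to the point $s$, pullback gives $\phi_s^* \xi = (\pi \circ \phi_s)^* h = 0$. Applying $\phi_s^*$ to the decomposition therefore annihilates the two summands involving $\xi$ and $\xi^2$, leaving
\[
\mathrm{Im}(\phi_s^*) \;=\; \phi_s^* \iota^* \mathrm{CH}^2(\mathbb{G}_g)_{\mathbb{Q}} \;=\; \iota_s^* \mathrm{CH}^2(\mathbb{G}_g)_{\mathbb{Q}},
\]
where the last equality uses $\iota_s = \iota \circ \phi_s$. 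The reverse inclusion $\iota_s^* \mathrm{CH}^2(\mathbb{G}_g)_{\mathbb{Q}} \subseteq \mathrm{Im}(\phi_s^*)$ is immediate from the same factorization.

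I do not anticipate a substantive obstacle: the argument is essentially formal once the projective bundle structure is in hand. The only point that needs to be kept straight is the identification of the tautological class $\xi$ with $\pi^* h$, which is forced by the natural embedding of $Y$ inside $\mathbb{P}_g \times \mathbb{G}_g$.
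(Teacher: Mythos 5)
Your argument is correct and follows the same skeleton as the paper's proof: the projective bundle formula for $\iota: Y \to \mathbb{G}_g$ together with the vanishing of $\phi_s^*\pi^*h$. The one place where you diverge is in how the relative hyperplane class $\xi$ is related to $\pi^*h$. The paper deliberately avoids normalizing $\xi$: it writes $\pi^*h = a\cdot\xi + \iota^*\beta$ with $a \in \mathbb{Z}$, $\beta \in \mathrm{CH}^1(\mathbb{G}_g)$, and rules out $a = 0$ by pulling back $h^{\dim\mathbb{P}_g}$ and comparing with $\dim\mathbb{G}_g < \dim\mathbb{P}_g$; this yields $\xi \in \mathbb{Q}\,\pi^*h + \iota^*\mathrm{CH}^1(\mathbb{G}_g)_{\mathbb{Q}}$, which is all that is needed. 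You instead pin down the normalization by realizing $Y = \mathbb{P}(\mathcal{K})$ with $\mathcal{K} = \ker\bigl(H^0(\mathbb{G}_g,\mathcal{U}_g)\otimes\mathcal{O} \twoheadrightarrow \mathcal{U}_g\bigr)$ a subbundle of the trivial bundle, so that $\mathcal{O}_{\mathbb{P}(\mathcal{K})}(-1)$ is the restriction of the tautological subbundle of $\mathbb{P}_g\times\mathbb{G}_g$ and hence $\xi = \pi^*h$ on the nose. This identification is legitimate and makes the final step slightly cleaner (no coefficient to control, and the statement even comes out integrally in the $\xi$-directions), at the cost of having to be careful about which convention for $\mathbb{P}(\mathcal{K})$ and which twist of the tautological class one is using --- precisely the bookkeeping the paper's coefficient argument sidesteps. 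Your closing remarks (the fiber-dimension count and the factorization $\iota_s = \iota\circ\phi_s$ giving both inclusions) are fine, though note the argument does not actually need the fibers of $\iota$ to have dimension at least two; if they were smaller the decomposition would simply have fewer summands and the same conclusion would hold.
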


\begin{proof}
Let $\xi \in \mathrm{CH}^1(Y)$ be the relative hyperplane class of $\iota: Y \to \mathbb{G}_g$. By the projective bundle formula, we have for $k \geq 0$,
\begin{equation} \label{Proj}
\mathrm{CH}^k(Y) = \xi^k \cdot \iota^*\mathrm{CH}^0(\mathbb{G}_g) \oplus \xi^{k - 1} \cdot \iota^*\mathrm{CH}^1(\mathbb{G}_g) \oplus \cdots \oplus \iota^*\mathrm{CH}^k(\mathbb{G}_g).
\end{equation}
Let $h \in \mathrm{CH}^1(\mathbb{P}_g)$ be the hyperplane class. Then we have
\[\pi^*h = a \cdot \xi + \iota^*\beta\]
for some $a \in \mathbb{Z}$ and $\beta \in \mathrm{CH}^1(\mathbb{G}_g)$. We claim that $a \neq 0$, otherwise \[\pi^*(h^{\dim \mathbb{P}_g}) = \iota^*(\beta^{\dim \mathbb{P}_g}).\]
Since $\dim \mathbb{P}_g > \dim \mathbb{G}_g$, the right-hand side vanishes, but the left-hand side is the pullback of a point class and is nonzero. Contradiction. Hence
\[\xi = \frac{1}{a}(\pi^*h - \iota^*\beta) \in \mathrm{CH}^1(Y)_{\mathbb{Q}}.\]
The lemma then follows from \eqref{Proj} for $k = 2$ and the fact that $\phi_s^*\pi^*h = 0$.
\end{proof}

We end this section by a few remarks on the generalized Franchetta conjecture.
\begin{enumerate}[labelindent=\parindent,leftmargin=*]
\item[(i)] By a standard ``spreading out'' argument (see \cite[Chapter 1]{V1}), it is equivalent to answer Question \ref{Conj} for general (in fact, very general) fibers $X_s$ over~$\mathcal{F}_g^0$. Moreover, classes in $\mathrm{CH}^2(X)$ supported over a proper closed subset of $\mathcal{F}_g^0$ vanish when restricted to a fiber $X_s$. 

Hence one may work with a family $Y \to B$ such that a general fiber $Y_s$ is a polarized $K3$ surface of degree $2g - 2$, and that $B$ rationally dominates~$\mathcal{F}_g$ via the natural rational map $B \dashrightarrow \mathcal{F}_g$. It then suffices to answer (the analog of) Question \ref{Conj} for classes in $\mathrm{CH}^2(Y)$ and $K3$ fibers $Y_s$. See Section 3 for an even more precise statement.

One may also formulate Question \ref{Conj} in terms of the Chow group $\mathrm{CH}_0(X_\eta)$ of the generic fiber $X_\eta$, but we omit this point of view.

\item[(ii)] By Ro{\u\i}tman's theorem \cite{Ro}, the Chow group $\mathrm{CH}_0(S)$ of a complex $K3$ surface~$S$ is torsion-free. Hence in Question \ref{Conj} it is equivalent to work with $\mathbb{Q}$-coefficients. This also means that under Lemma \ref{Lem0}, we have
\[\mathrm{Im}(\phi_s^*: \mathrm{CH}^2(Y) \to \mathrm{CH}_0(Y_s)) \subset \mathbb{Z}c_{Y_s}\]
if and only if
\[\mathrm{Im}(\iota_s^*: \mathrm{CH}^2(\mathbb{G}_g) \to \mathrm{CH}_0(Y_s)) \subset \mathbb{Z}c_{Y_s}.\]

\item[(iii)] Instead of restricting to $\mathcal{F}_g^0$, one may work with the moduli stack and the universal family over it. Question \ref{Conj} can then be formulated using the Chow groups of smooth Deligne-Mumford stacks with $\mathbb{Q}$-coefficients. This notably covers the case $g = 2$, where a general $K3$ surface over $\mathcal{F}_2$ carries an involution. Our proof in Section 2 works in this case without change.
\end{enumerate}

\section{Polarized $K3$ surfaces as unique complete intersections}
\noindent In this section we deal with the cases $g \leq 10$ and $g=12, 18, 20$. For these $g$, the Mukai model embeds a general polarized $K3$ surface of degree $2g - 2$ in $\mathbb{G}_g$ as a complete intersection with respect to $\mathcal{U}_g$, and the embedding is unique up to automorphisms of $\mathbb{G}_g$ and~$\mathcal{U}_g$. Moreover, the variety $\mathbb{G}_g$ is a Grassmannian or an orthogonal Grassmannian.

Since $\mathbb{P}_g$ rationally dominates the moduli space $\mathcal{F}_g$, to prove Theorem \ref{Main} it suffices to show that the restriction of any class in $\mathrm{CH}^2(Y)$ to a $K3$ fiber $Y_s$ lies in~$\mathbb{Z}c_{Y_s}$. By Lemma \ref{Lem0}, it is equivalent to show that 
\[
\mathrm{Im}(\iota_s^*: \mathrm{CH}^2(\mathbb{G}_g) \to \mathrm{CH}^2(Y_s)) \subset \mathbb{Z}c_{Y_s}.
\]
This allows us to work with a single $K3$ surface $S$ with an embedding
\[i: S \hookrightarrow \mathbb{G}_g.\]

If $g \leq 5$, the variety $\mathbb{G}_g$ is a projective space and its Chow ring is generated by the hyperplane class. Thus Theorem \ref{Main} follows from property (i) of $c_S$ in Section~0.

Now assume that ${\mathbb{G}_g}$ is not a projective space. It is well-known that the Chow group $\mathrm{CH}^2({\mathbb{G}(r,n)})$ of the Grassmannian is generated by the Chern classes $c_1(\mathcal{Q})^2$ and $c_2(\mathcal{Q})$, where $\mathcal{Q}$ is the universal quotient bundle. For the orthogonal Grassmannians, we have instead
\[
\mathrm{CH}^2(\mathbb{OG}(5,10))=\mathbb{Z} \Big(\frac{1}{2}c_2(\mathcal{Q})\Big) \oplus \mathbb{Z} \Big(\frac{1}{4}c_1(\mathcal{Q})^2\Big)
\]
and
\[
\mathrm{CH}^2(\mathbb{OG}(3,9))= \mathbb{Z} \Big(\frac{1}{2}c_2(\mathcal{Q})\Big) \oplus \mathbb{Z} c_1(\mathcal{Q})^2,
\]
where $\mathcal{Q}$ is the corresponding universal quotient bundle (see \cite{Ta}). Hence in all cases a class $\alpha \in \mathrm{CH}^2(\mathbb{G}_g)$ can be uniquely expressed as
\[
\alpha = a \cdot c_2(\mathcal{Q}) + b \cdot c_1(\mathcal{Q})^2,
\]
with $a \in \mathbb{Z}$ if $\mathbb{G}_g$ is a Grassmannian, or $a \in \frac{1}{2} \mathbb{Z}$ if $\mathbb{G}_g$ is an orthogonal Grassmannian. For convenience we define the index $I(\alpha)$ of $\alpha \in \mathrm{CH}^2(\mathbb{G}_g)$ to be the coefficient $a$.

By property (i) of $c_S$ in Section 0, we have $i^\ast (c_1(\mathcal{Q})^2) \in \mathbb{Z}c_S$. Hence the following proposition implies Theorem \ref{Main} for $g=6,7,8,9,10,12,18,20$.

\begin{prop}\label{Prop}
With the notation as above, we have $i^\ast c_2(\mathcal{Q}) \in \mathbb{Z}c_S$.
\end{prop}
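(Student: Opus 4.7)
The plan is a direct Chern-class calculation based on the normal bundle exact sequence
\[0 \to T_S \to T_{\mathbb{G}_g}|_S \to \mathcal{U}_g|_S \to 0.\]
Since $c_1(T_S) = 0$, the Whitney formula combined with property (ii) of $c_S$ yields
\[i^\ast c_2(T_{\mathbb{G}_g}) = 24\, c_S + i^\ast c_2(\mathcal{U}_g) \quad \text{in } \mathrm{CH}_0(S).\]
The strategy is to expand both $c_2(T_{\mathbb{G}_g})$ and $c_2(\mathcal{U}_g)$ in the generators $c_1(\mathcal{Q})^2, c_2(\mathcal{Q})$ of $\mathrm{CH}^2(\mathbb{G}_g)_{\mathbb{Q}}$, pull back to $S$, and absorb every term proportional to $i^\ast c_1(\mathcal{Q})^2$ into $\mathbb{Z}c_S$ by property (i). The surviving identity will take the form
\[n_g \cdot i^\ast c_2(\mathcal{Q}) = 24\, c_S + \mu_g \cdot i^\ast c_1(\mathcal{Q})^2 \in \mathbb{Z}c_S\]
for rational coefficients $n_g, \mu_g$ depending on $g$; provided $n_g \neq 0$, the torsion-freeness of $\mathrm{CH}_0(S)$ (Ro{\u\i}tman's theorem, Remark (ii) of Section 1) then forces $i^\ast c_2(\mathcal{Q}) \in \mathbb{Z}c_S$.

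To carry out the expansion in the Grassmannian case I would use $T_{\mathbb{G}(r,n)} = \mathcal{E}^\vee \otimes \mathcal{Q}$, the elimination rule $c_2(\mathcal{E}) = c_1(\mathcal{Q})^2 - c_2(\mathcal{Q})$ coming from $c(\mathcal{E})c(\mathcal{Q}) = 1$, and the standard Chern-class formulas for tensor products and for the exterior power bundles $\wedge^k\mathcal{Q}$, $\wedge^2\mathcal{E}^\vee$ that appear as summands of $\mathcal{U}_g$. For the orthogonal Grassmannians $\mathbb{OG}(5,10)$ and $\mathbb{OG}(3,9)$, one uses the analogous description of the tangent bundle together with the Chern classes of the spin bundles $\mathcal{V}_7$ and $\mathcal{V}_{18}$; the fact that the coefficient of $c_2(\mathcal{Q})$ may be a half-integer in these cases costs only a factor of $2$ after clearing denominators and does not affect the conclusion. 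Line-bundle summands of $\mathcal{U}_g$ contribute only to the $c_1(\mathcal{Q})^2$ component, whereas the non-abelian summands are what produce the $c_2(\mathcal{Q})$-contribution to be balanced against that coming from $c_2(T_{\mathbb{G}_g})$.

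The main obstacle is combinatorial rather than conceptual: the bookkeeping above has to be performed separately for each of the eight values $g\in\{6,7,8,9,10,12,18,20\}$, and in every case one must verify that the resulting coefficient $n_g$ is nonzero. This is most delicate for $g\in\{9,10,12,18,20\}$, where both $T_{\mathbb{G}_g}$ and $\mathcal{U}_g$ carry nontrivial $c_2(\mathcal{Q})$-contributions that could a priori cancel; should some $n_g$ happen to vanish one would instead appeal to the alternative proof via Voisin's result~\cite{V2} indicated in the introduction.
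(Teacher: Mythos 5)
Your strategy is exactly the paper's second proof: the normal bundle sequence gives $i^\ast c_2(T_{\mathbb{G}_g}) = c_2(T_S) + i^\ast c_2(\mathcal{U}_g)$, one expands everything in the basis $c_1(\mathcal{Q})^2, c_2(\mathcal{Q})$ of $\mathrm{CH}^2(\mathbb{G}_g)_{\mathbb{Q}}$, absorbs the $c_1(\mathcal{Q})^2$-terms and $c_2(T_S)$ into $\mathbb{Z}c_S$ via properties (i) and (ii) of $c_S$, and reduces to the non-vanishing of the coefficient $n_g$ of $i^\ast c_2(\mathcal{Q})$ (the paper's $I(c_2(T_{\mathbb{G}_g})) - I(c_2(\mathcal{U}_g))$). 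Your use of Ro{\u\i}tman's theorem to pass from $n_g \cdot i^\ast c_2(\mathcal{Q}) \in \mathbb{Z}c_S$ to $i^\ast c_2(\mathcal{Q}) \in \mathbb{Z}c_S$ is correct, and your fallback via Voisin's theorem on simple rigid bundles is precisely the paper's first proof.

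However, as written the proposal stops exactly where the real content begins: the verification that $n_g \neq 0$ for each $g$ is deferred, and you yourself flag that it ``could a priori cancel.'' This is not a formality. For $g = 9$ one has $I(c_2(T_{\mathbb{G}(3,6)})) = 2r - n = 0$, so the entire argument rests on showing $I(c_2(\wedge^2\mathcal{Q})) = 1 \neq 0$; for $g = 12, 20$ the paper does not even compute $I(c_2(\wedge^2\mathcal{E}^\vee))$ but observes that $-1 - 3k$ can never vanish for $k \in \mathbb{Z}$ (and similarly $2 - 5k$ for $g = 18$, with $k \in \frac{1}{2}\mathbb{Z}$). Moreover, for the orthogonal Grassmannians your phrase ``the analogous description of the tangent bundle'' hides a genuine computation: the paper realizes $\mathbb{OG}(r,n) \hookrightarrow \mathbb{G}(r,n)$ as the zero locus of a section of a bundle $\mathcal{W}$ with fibers $H^0(\mathbb{P}^{r-1}, \mathcal{O}(2))$ and determines $I(c_2(\mathcal{W})) = -(r+2)$ by a Grothendieck--Riemann--Roch computation on $\mathbb{P}(\mathcal{E})$; without this (or an equivalent input) the indices $I(c_2(T_{\mathbb{OG}(5,10)}))$ and $I(c_2(T_{\mathbb{OG}(3,9)}))$ are not determined and the cases $g = 7, 18$ remain open. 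So the plan is sound and matches the paper, but to count as a proof it must include the case-by-case index computations.
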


We give two independent proofs of the proposition.
\begin{proof}[First proof]
Mukai showed in \cite{Muk1,Muk3} that the restriction of either $\mathcal{Q}$ or $\mathcal{E}^\vee$ to a general $S$ is simple and rigid, where $\mathcal{E}$ is the universal subbundle. In fact, the rigidity ensures that the embedding of $S$ in $\mathbb{G}_g$ is unique. The proposition follows from a strong result of Voisin \cite[Corollary 1.10]{V2} that the second Chern class of any simple rigid vector bundle on a $K3$ surface $S$ lies in $\mathbb{Z}c_S$, which was conjectured by Huybrechts earlier in \cite{Huy}.
\end{proof}

Since part of the original motivation of the generalized Franchetta conjecture was to make Huybrechts' conjecture as its consequence (see \cite[Section 5]{OG}), we give a direct proof of Proposition \ref{Prop} without using Voisin's result.

\begin{proof}[Second proof]
We first consider the cases where $\mathbb{G}_g$ is a Grassmannian. The standard exact sequence of normal bundles
\[
0 \rightarrow T_S \rightarrow i^\ast T_{\mathbb{G}_g} \rightarrow i^\ast \mathcal{U}_g \rightarrow 0
\]
yields the following relation in $\mathrm{CH}_0(S)$:
\begin{equation}\label{relation}
i^\ast c_2(T_{\mathbb{G}_g}) = c_2(T_S) + i^\ast c_2(\mathcal{U}_g).
\end{equation}
Here $T_{\mathbb{G}_g}$ and $T_S$ are the corresponding tangent bundles. Using the index of classes in $\mathrm{CH}^2(\mathbb{G}_g)$, the relation \eqref{relation} can be written as 
\begin{equation}\label{relation0}
\Big{(}I(c_2(T_{\mathbb{G}_g}))-I(c_2(\mathcal{U}_g)) \Big{)} \cdot i^\ast c_2(\mathcal{Q}) = c_2(T_S) + \gamma,
\end{equation}
where $\gamma$ can be expressed in terms of divisor classes on $S$. By properties (i) and~(ii) of $c_S$ in Section 0, both $c_2(T_S)$ and $\gamma$ lie in $\mathbb{Z}c_S$. Hence it suffices to verify that 
\begin{equation}\label{Lemma}
I(c_2(T_{\mathbb{G}_g}))-I(c_2(\mathcal{U}_g)) \neq 0.
\end{equation}
The tangent bundle $T_{\mathbb{G}(r, n)}$ of the Grassmannian is $\mathcal{H}om(\mathcal{E},\mathcal{Q})$, where $\mathcal{E}$ is the universal subbundle. By computing the Chern character
\[
\mathrm{ch}(\mathcal{H}om(\mathcal{E},\mathcal{Q})) = \mathrm{ch}(\mathcal{E}^\vee \otimes \mathcal{Q}) = \mathrm{ch}(\mathcal{E}^\vee) \cdot \mathrm{ch}(\mathcal{Q})
\]
and the standard relation $c(\mathcal{E})\cdot c(\mathcal{Q})=1$ between the total Chern classes, we have
\[
I(c_2(T_{\mathbb{G}(r,n)})) = 2r-n.
\]
The following is a case-by-case study:
\begin{itemize}[align=left,labelindent=\parindent,labelwidth=\widthof{$g=12, 20$},labelsep=1.5em,leftmargin=!]
\item[$g=6, 8$] Here $\mathcal{U}_g$ is a direct sum of line bundles. Hence $I(c_2(\mathcal{U}_g)) = 0$ and
\[
I(c_2(T_{\mathbb{G}_g}))-I(c_2(\mathcal{U}_g)) = 2r-n \neq 0.
\]
\item[$g=9$] We have $I(c_2(T_{\mathbb{G}_9})) = 0$ and $I(c_2(\mathcal{U}_9)) = I(c_2(\wedge^2\mathcal{Q})) = 1$. Hence
\[
I(c_2(T_{\mathbb{G}_9}))-I(c_2(\mathcal{U}_9)) = -1 \neq 0.
\]
\item[$g=10$] We have $I(c_2(T_{\mathbb{G}_{10}})) = -3$ and $I(c_2(\mathcal{U}_{10})) = I(c_2(\wedge^4\mathcal{Q})) = 1$.~Hence
\[
I(c_2(T_{\mathbb{G}_{10}}))-I(c_2(\mathcal{U}_{10})) = -4 \neq 0.
\]
\item[$g=12, 20$] We have $I(c_2(T_{\mathbb{G}_g})) = -1$ and $I(c_2(\mathcal{U}_g)) = 3 I(c_2(\wedge^2\mathcal{E}^\vee))$. Hence
\[
I(c_2(T_{\mathbb{G}_g}))-I(c_2(\mathcal{U}_g)) = -1 - 3 I(c_2(\wedge^2\mathcal{E}^\vee)) \neq 0. 
\]
\end{itemize}

The orthogonal Grassmannian cases ($g=7, 18$) are similar. The relation \eqref{relation0} still holds, and it suffices to show \eqref{Lemma}. Note that
the left-hand side of \eqref{Lemma} may be a half integer.

The natural embedding $j: \mathbb{OG}(r,n) \hookrightarrow \mathbb{G}(r,n)$ can be realized as the zero locus of a smooth section of the vector bundle $\mathcal{W}$, which is given by the cohomology group $H^0(\mathbb{P}^{r-1}, \mathcal{O}(2))$ over every closed point \[ [\mathbb{P}^{r-1}\subset \mathbb{P}^{n-1}] \in \mathbb{G}(r,n).\]
Hence we have \[I(c_2(T_{\mathbb{OG}(r,n)})) = I(j^\ast c_2(T_{\mathbb{G}(r,n)})) - I(j^\ast c_2(\mathcal{W})).\] The term $I(j^\ast c_2(T_{\mathbb{G}(r,n)}))$ was already calculated, and the term $I(j^\ast c_2(\mathcal{W}))$ can be determined by the following Grothendieck--Riemann--Roch calculation.

We consider $p: \mathbb{P}(\mathcal{E})\rightarrow \mathbb{G}(r,n)$ the projective bundle on $\mathbb{G}(r,n)$ associated to the universal subbundle $\mathcal{E}$. Let $L=\mathcal{O}_{\mathbb{P}(\mathcal{E})}(1)$ and let $\xi$ be the relative hyperplane class $c_1(L)$. We have $R^kp_\ast L= 0$ for $k>0$. Hence by the Grothendieck--Riemann--Roch theorem, we have
\[
\mathrm{ch}(\mathcal{W}) = \mathrm{ch}(Rp_\ast L^{\otimes 2})
= p_\ast (\mathrm{exp}(2\xi)\cdot \mathrm{td}(T_p)).
\]
Together with the exact sequence
\[
0 \rightarrow \mathcal{O}_{\mathbb{P}(\mathcal{E})} \rightarrow p^\ast \mathcal{E} \otimes L \rightarrow T_p \rightarrow 0,
\]
we obtain for $r = 3, 5$,
\[
I(c_2(\mathcal{W})) = -(r + 2).
\]

We finish the proof of Proposition \ref{Prop}:
\begin{itemize}[align=left,labelindent=\parindent,labelwidth=\widthof{$g=18$},labelsep=1.5em,leftmargin=!]
\item[$g=7$] Here $\mathcal{U}_7$ is a direct sum of line bundles. Hence $I(c_2(\mathcal{U}_7))=0$ and
\[
I(c_2(T_{\mathbb{G}_7})) - I(c_2(\mathcal{U}_7)) = 0 - (-7) = 7 \neq 0.
\]
\item[$g=18$] We have $I(c_2(T_{\mathbb{G}_{18}})) = -3 - (-5)$ and $I(c_2(\mathcal{U}_{18})) = 5 I(c_2(\mathcal{V}_{18}))$. Hence
\[
I(c_2(T_{\mathbb{G}_{18}})) - I(c_2(\mathcal{U}_{18}))  = 2 - 5 I(c_2(\mathcal{V}_{18})) \neq 0. \qedhere
\]
\end{itemize}
\end{proof}

\section{Polarized $K3$ surfaces as nonunique complete intersections}
\noindent In this section we treat the remaining cases $g = 13, 16$. In both cases, the embedding of a polarized $K3$ surface $S$ of degree $2g - 2$ in $\mathbb{G}_g$ is not unique and the restriction of the tautological bundles to $S$ may not be rigid. Hence the methods in Section 2 break down.

We keep the notation of Section 1 and write $\Phi: \mathbb{P}_g \dashrightarrow \mathcal{F}_g$ for the dominant rational map. Let $t \in \mathcal{F}_g^0$ be a closed point outside the indeterminacy locus of~$\Phi$ in~$\mathcal{F}_g$. Given two closed points $s_1, s_2 \in \mathbb{P}_g$ with $\Phi(s_1) = \Phi(s_2) = t$, there are canonical isomorphisms
\[Y_{s_1} \cong Y_{s_2} \cong X_t.\]
We identify $\mathrm{CH}_0(Y_{s_1}),  \mathrm{CH}_0(Y_{s_2})$ with $\mathrm{CH}_0(X_t)$, and define
\[\mathrm{CH}^2(Y)_{\mathrm{inv}} = \{\alpha \in \mathrm{CH}^2(Y) \,|\, \phi_{s_1}^\ast \alpha = \phi_{s_2}^\ast \alpha \textrm{ for all } s_1, s_2 \in \mathbb{P}_{g} \textrm{ above}\}.\]
Recall that $\phi_s: Y_s \hookrightarrow Y$ is the natural embedding for $s \in \mathbb{P}_g$.

Again by the ``spreading out'' argument and the fact that classes supported over a proper closed subset of $\mathcal{F}_g^0$ do not contribute, to prove Theorem \ref{Main} it suffices show that
\[
\mathrm{Im}(\phi_s^*: \mathrm{CH}^2(Y)_{\mathrm{inv}} \to \mathrm{CH}_0(Y_s)) \subset \mathbb{Z}c_{Y_s}
\]
for all (or general, or very general) $K3$ fibers $Y_s$.

First we consider the case $g=13$. As described by the Mukai model, let
\[
i: S \hookrightarrow \mathbb{G}(3,7).
\]
be the embedding of a $K3$ surface $S$ in $\mathbb{G}(3,7)$. The restriction of $\mathcal{E}^\vee$ (dual of the universal subbundle) to $S$ is semi-rigid, which carries a $2$-dimensional deformation. Let $M_S$ be the moduli space of stable vector bundles on $S$ with Mukai vector $(3, H, 4)$, where $H$ is the polarization class. A general point of $M_S$ is represented by $i^*\mathcal{E}^\vee$ for some $i$; see \cite{Muk4} for details. Note that $M_S$ is also a polarized $K3$ surface with $g = 13$.

Let $s \in \mathbb{P}_{13}$ be a closed point with $K3$ fiber $Y_s$, and let $\iota_s: Y_s \hookrightarrow \mathbb{G}(3, 7)$ be as in Section 1. By Lemma \ref{Lem0}, the restriction $\phi_s^*\alpha$ of a class $\alpha \in \mathrm{CH}^2(Y)_{\mathrm{inv}}$ can be expressed as
\begin{equation}\label{express}
\phi_s^*\alpha = a \cdot \iota_s^\ast c_2(\mathcal{Q}) + b \cdot \iota_s^\ast (c_1(\mathcal{Q})^2),
\end{equation}
where $\mathcal{Q}$ is the universal quotient bundle and $a, b \in \mathbb{Q}$ are constants independent of $s \in \mathbb{P}_{13}$. By property (i) of $c_{Y_s}$ in Section 0, we have $\iota_s^\ast (c_1(\mathcal{Q})^2) \in \mathbb{Z}c_{Y_s}$.

Theorem \ref{Main} for $g=13$ is a direct consequence of the following lemma.

\begin{lem}\label{lastLem}
In the expression \eqref{express}, the coefficient $a$ is zero.
\end{lem}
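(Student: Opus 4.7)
The plan is to exploit the invariance of $\alpha$ together with a nontrivial variation of the Chern class $c_2(\iota_s^*\mathcal{E}^\vee)$ as $s$ ranges over the fiber $\Phi^{-1}(t)$. First, I would note that the coefficients $a, b$ in \eqref{express} are intrinsic to $\alpha$: unpacking the proof of Lemma \ref{Lem0}, the projective bundle formula combined with the relation $\pi^*h = a_0 \xi + \iota^*\beta$ (with $a_0 \neq 0$) yields $\phi_s^*\alpha = \iota_s^*\gamma$ for a single class $\gamma \in \mathrm{CH}^2(\mathbb{G}(3,7))_{\mathbb{Q}}$ depending only on $\alpha$; expanding $\gamma = a \cdot c_2(\mathcal{Q}) + b \cdot c_1(\mathcal{Q})^2$ then fixes the rationals $a, b$ independently of $s$.

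Under the canonical isomorphism $Y_s \cong X_t$, the pullback $\iota_s^*c_1(\mathcal{Q})$ corresponds to the polarization $H \in \mathrm{CH}^1(X_t)$, so by Beauville--Voisin the summand $b \cdot \iota_s^*c_1(\mathcal{Q})^2 = b \cdot H^2 = 24 b \cdot c_{X_t}$ is independent of $s$. Invariance of $\alpha$ therefore reduces to the constancy of the map
\[
s \longmapsto a \cdot \iota_s^*c_2(\mathcal{Q}) = a \cdot \bigl(H^2 - c_2(\iota_s^*\mathcal{E}^\vee)\bigr) \in \mathrm{CH}_0(X_t)
\]
on the fiber $\Phi^{-1}(t)$, where I have used the Whitney relation $c_2(\mathcal{Q}) = c_1(\mathcal{Q})^2 - c_2(\mathcal{E})$ on $\mathbb{G}(3,7)$ together with $c_2(\mathcal{E}^\vee) = c_2(\mathcal{E})$. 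Writing $S = X_t$, this further reduces to the assertion that the map
\[
\psi_S: M_S \longrightarrow \mathrm{CH}_0(S), \qquad [E] \longmapsto a \cdot c_2(E),
\]
is constant on the moduli space $M_S$ of stable bundles with Mukai vector $(3, H, 4)$. To conclude $a = 0$, it then suffices to exhibit, for at least one such $S$, two bundles $E_1, E_2 \in M_S$ with $c_2(E_1) \neq c_2(E_2)$ in $\mathrm{CH}_0(S)$.

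I expect this last non-constancy to be the main technical obstacle. The approach I would take goes through Mukai's theory: $M_S$ is itself a polarized K3 surface of genus $13$, and the universal bundle $\mathcal{U}$ on $M_S \times S$ realizes a Fourier--Mukai equivalence $D^b(M_S) \simeq D^b(S)$. If $\psi_S$ were constant, the restriction $\mathrm{ch}_2(\mathcal{U})|_{\{[E]\} \times S}$ would be a fixed 0-cycle on $S$ for every $[E] \in M_S$, placing a strong rigidity constraint on $\mathcal{U}$ that is in tension with the infinite-dimensionality of $\mathrm{CH}_0(M_S)_{\mathbb{Q}}$ (Mumford) and with $\mathcal{U}$ being a non-degenerate derived-equivalence kernel. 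Making this rigorous is the delicate step; alternatively, one may attempt to produce a concrete one-parameter subfamily of $M_S$ (for instance, along a rational or elliptic curve in $M_S$) along which $c_2$ can be seen to vary, via a Grothendieck--Riemann--Roch computation on the restricted universal bundle.
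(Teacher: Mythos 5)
Your reduction is correct and matches the paper's strategy: the coefficients $a,b$ are indeed independent of $s$ (this is exactly what Lemma \ref{Lem0} provides), the $c_1(\mathcal{Q})^2$-term is absorbed into $\mathbb{Z}c_{X_t}$ by Beauville--Voisin, and via $c_2(\mathcal{Q}) = c_1(\mathcal{Q})^2 - c_2(\mathcal{E})$ the whole lemma comes down to showing that $[E] \mapsto c_2(E) \in \mathrm{CH}_0(S)$ is non-constant on $M_S$. (One small point you elide: to pass from ``constancy on the fiber $\Phi^{-1}(t)$'' to ``constancy on $M_S$'' you need to know that distinct points of $M_S$ are realized by $\iota_{s_1}^*\mathcal{E}^\vee$, $\iota_{s_2}^*\mathcal{E}^\vee$ for $s_1, s_2$ in the \emph{same} fiber of $\Phi$; this is Mukai's theorem that $\mathbb{P}_{13}$ dominates the moduli space of triples $(S,H,E)$, which the paper cites explicitly and which you should too.)

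The genuine gap is that you never prove the non-constancy, and you correctly flag it as ``the main technical obstacle.'' Your first suggested route --- tension between constancy of $c_2$ along $M_S$ and the universal bundle being a derived-equivalence kernel --- is exactly the right idea, but the step that makes it rigorous is a specific standard fact you do not invoke: a Fourier--Mukai equivalence $D^b(M_S) \simeq D^b(S)$ with kernel $\mathbb{F}$ induces, via the correspondence $\mathrm{ch}(\mathbb{F})\cdot\sqrt{\mathrm{td}}$, an \emph{isomorphism} of ungraded rational Chow groups $\theta: \mathrm{CH}^*(M_S) \to \mathrm{CH}^*(S)$ (the kernel of the inverse functor gives the inverse correspondence). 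Computing $\theta$ on a point class gives $\theta([E]) = 3[S] + H + 15c_S - c_2(E)$, in which every term except $-c_2(E)$ is independent of $[E]$. Hence injectivity of $\theta$ on $\mathrm{CH}_0(M_S)$ forces $[E] \mapsto c_2(E)$ to be injective, and since $\mathrm{CH}_0(M_S)$ contains more than one point class ($M_S$ is itself a $K3$ surface, so one does not even need Mumford's full infinite-dimensionality), the map is non-constant. Without this input your argument stops at a heuristic; your alternative suggestion (a one-parameter family in $M_S$ plus Grothendieck--Riemann--Roch) is not obviously workable, since along a rational curve in $M_S$ the class $c_2(E)$ would in fact be constant, so one would have to control $c_2$ along a non-rational family, which is precisely the kind of statement the Chow-theoretic Fourier--Mukai isomorphism is designed to deliver.
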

\begin{proof}
We choose closed points $s_1, s_2 \in \mathbb{P}_{13}$ with $\Phi(s_1) =\Phi(s_2) = t \in \mathcal{F}_{13}^0$, such that the vector bundles $\iota_{s_1}^\ast \mathcal{E}^\vee, \iota_{s_2}^\ast \mathcal{E}^\vee$ represent different point classes in $\mathrm{CH}_0(M_{X_t})$. This is possible by \cite[Theorem 2]{Muk4}, which shows that $\mathbb{P}_{13}$ rationally dominates the moduli space of triples $(S, H, E)$ where $S$ is a $K3$ surface, $H$ is a polarization with $H^2 = 24$, and $E$ is a stable vector bundle with Mukai vector $(3,H,4)$. 

Since $\alpha \in \mathrm{CH}^2(Y)_{\mathrm{inv}}$, we have by definition $\phi_{s_1}^\ast \alpha = \phi_{s_2}^\ast \alpha$ and hence
\begin{equation}\label{AAA}
a \cdot \iota_{s_1}^\ast c_2(\mathcal{Q}) = a \cdot \iota_{s_2}^\ast c_2(\mathcal{Q}),
\end{equation}
viewed as an equality in $\mathrm{CH}_0(X_t)_{\mathbb{Q}}$.

On the other hand, let $\mathbb{F}$ be a universal sheaf over $M_{X_t} \times X_t$ (which exists by the numerics of the Mukai vector; see \cite[Corollary 4.6.7]{HL}). The correspondence \[
\mathrm{ch}(\mathbb{F}) \cdot \sqrt{\mathrm{td}(T_{M_{X_t} \times X_t})} \in \mathrm{CH}^*(M_{X_t} \times X_t)_{\mathbb{Q}}\]
induces an isomorphism of (ungraded) Chow groups
\[
\theta: \mathrm{CH}^\ast (M_{X_t}) \xrightarrow{\simeq} \mathrm{CH}^\ast (X_t).
\]
Here for $[E] \in M_{X_t}$, we have
\[\theta([E]) = 3[X_t] + H + 15c_{X_t} - c_2(E) \in \mathrm{CH}^*(X_t).\]

According to our choice of $s_1,s_2 \in \mathbb{P}_{13}$, the vector bundles $\iota_{s_1}^\ast \mathcal{E}^\vee, \iota_{s_2}^\ast \mathcal{E}^\vee$ represent different classes in $\mathrm{CH}_0(M_{X_t})$. By applying $\theta$, we find
\[\iota_{s_1}^\ast c_2(\mathcal{E}^\vee) \neq \iota_{s_2}^\ast c_2(\mathcal{E}^\vee)\]
in $\mathrm{CH}_0(X_t)$, and together with \eqref{AAA} we obtain $a=0$.
\end{proof}

Finally we consider the case $g=16$. The variety $\mathbb{G}_{16}=\mathbb{G}(2,3,4)$ is realized as a GIT quotient of $\mathbb{C}^2\otimes \mathbb{C}^3 \otimes \mathbb{C}^4$ by the obvious action of $\mathrm{GL}_2 \times \mathrm{GL}_3$ on the first two factors. As described in \cite{EPS} (see also \cite{Muk5}), there are two tautological vector bundles $\mathcal{V}_{16}$ and~$\widetilde{\mathcal{V}}_{16}$ of rank 3 and 2 respectively, as well as a morphism 
\[
\mathcal{V}_{16}\otimes (\mathbb{C}^4)^\vee \rightarrow \widetilde{\mathcal{V}}_{16}.
\]
Further, it was shown in \cite[Proposition 2]{EPS} that the Chow ring $\mathrm{CH}^\ast(\mathbb{G}(2,3,4))$ is generated by the Chern classes of $\mathcal{V}_{16}, \widetilde{\mathcal{V}}_{16}$. To prove Theorem \ref{Main} we have to take care of the second Chern classes of both tautological bundles.

Let $i: S\hookrightarrow \mathbb{G}(2,3,4)$ be the embedding of a $K3$ surface $S$ in $\mathbb{G}(2,3,4)$ as in the Mukai model. By the same reasoning as in Section 2, we have the following relation in $\mathrm{CH}_0(S)$:
\begin{equation}\label{lastrelation}
i^*c_2(T_{\mathbb{G}(2,3,4)}) = c_2(T_S)+i^*c_2(\mathcal{U}_{16}).
\end{equation}
Here $\mathcal{U}_{16}= \mathcal{V}_{16}^{\oplus 2} \oplus \widetilde{\mathcal{V}}_{16}^{\oplus 2}$. Using the exact sequence (see \cite[(4-4)]{ES})
\[
0 \to \mathcal{O}_{\mathbb{G}(2,3,4)} \to \mathcal{E}nd(\mathcal{V}_{16}) \oplus \mathcal{E}nd(\widetilde{\mathcal{V}}_{16}) \to \mathcal{H}om(\mathcal{V}_{16}\otimes (\mathbb{C}^4)^\vee, \widetilde{\mathcal{V}}_{16}) \to T_{\mathbb{G}(2,3,4)} \to 0,
\]
the relation \eqref{lastrelation} can be written as
\[
6c_2(i^\ast \widetilde{\mathcal{V}}_{16}) = c_2(T_S)+\gamma,
\]
where $\gamma$ can be expressed in terms of divisor classes on $S$. By properties (i) and~(ii) of $c_S$ in Section $0$, this verifies that $i^\ast c_2(\widetilde{\mathcal{V}}_{16}) \in \mathbb{Z}c_S$. 

Alternatively, by Mukai's results \cite[Propositions 1.3 and 2.2]{Muk5}, for a general~$S$ the vector bundle $i^\ast \widetilde{\mathcal{V}}_{16}$ is simple and rigid. The statement $i^\ast c_2(\widetilde{\mathcal{V}}_{16}) \in \mathbb{Z}c_S$ also follows from Voisin's result \cite[Corollary 1.10]{V2}.

Let $s \in \mathbb{P}_{16}$ be a closed point with $K3$ fiber $Y_s$, and let $\iota_s: Y_s \hookrightarrow \mathbb{G}(2, 3, 4)$ be as before. Again by Lemma \ref{Lem0} and property (i) of $c_{Y_s}$ in Section 0, the restriction $\phi_s^*\alpha$ of a class $\alpha \in \mathrm{CH}^2(Y)_{\mathrm{inv}}$ can be expressed as 
\[
\phi_s^*\alpha = a\cdot \iota_s^\ast c_2(\mathcal{V}_{16}) + \tilde{a} \cdot\iota_s^\ast c_2(\widetilde{\mathcal{V}}_{16}) + b \cdot c_{Y_s},
\]
where $a, \tilde{a}, b \in \mathbb{Q}$ are constants independent of $s \in \mathbb{P}_{16}$. Moreover, the fact that $\iota_s^\ast c_2(\widetilde{\mathcal{V}}_{16}) \in \mathbb{Z}c_{Y_s}$ implies
\begin{equation}\label{lasteq}
\phi_s^*\alpha = a \cdot \iota_s^\ast c_2(\mathcal{V}_{16}) + b' \cdot c_{Y_s}
\end{equation}
for some $a, b' \in \mathbb{Q}$ independent of $s \in \mathbb{P}_{16}$. Since $\iota_s^\ast\mathcal{V}_{16}$ is semi-rigid with Mukai vector $(3,H,5)$ by \cite[Proposition 2.2]{Muk5}, an identical argument as in the proof of Lemma \ref{lastLem} yields $a=0$ in the expression \eqref{lasteq}. 

This finishes the proof of Theorem \ref{Main} for $g=16$.

\end{document}